\newif\ifarxiv
    \newtheorem{theorem}{Theorem}[section]
    \newcommand{\jkqed}[0]{}
    \newcommand{\jkqed}[0]{\qed}
    \journalname{Ramanujan}
\newcommand{\boxDiagramUnitlength}[0]{6mm}
\newcommand{\boxDiagramLinethickness}[0]{0.2mm}
\newcommand{\sequence}[3][1]{\ensuremath{#2_{#1}\dots #2_{#3}}}
\newcommand{\singletonSequence}[1]{%
   {\ensuremath{\langle#1\rangle}}}
\newcommand{\longsum}[3][1]{\ensuremath{#2_{#1} + \dots + #2_{#3}}}
\newcommand{\concatenate}[0]{} 
\newcommand{\sac}[0]{\ensuremath{\mathcal{A}}}
\newcommand{\nac}[0]{\ensuremath{A}}
\newcommand{\sfl}[0]{\ensuremath{L}}
\newcommand{\lexminA}[0]{\ensuremath{\mathrm{M}_\mathcal{A}}}
\newcommand{\lexsuccA}[0]{\ensuremath{\mathrm{S}_\mathcal{A}}}
\newcommand{\nump}[0]{\ensuremath{p}}
\newcommand{\cardinality}[1]{\ensuremath{|#1|}}
\newcommand{\suchthat}[0]{\ensuremath{\mid}}
\newcommand{\floor}[1]{\ensuremath{\lfloor #1 \rfloor}}
\newcommand{\stacksum}[3]{\sum_{\substack{#1\\#2}}#3}
\newcommand{\chapterNumber}[1]{ch.#1}
\newcommand{\sectionRef}[1]{Section~\ref{#1}}
\newcommand{\figureRef}[1]{Figure~\ref{#1}}
\newcommand{\equationRef}[1]{\eqref{#1}} 
\newcommand{\theoremRef}[1]{Theorem~\ref{#1}}
\begin{document}
\title{The large-parts formula for $p(n)$ }

\ifarxiv
\author{Jerome Kelleher
        \thanks{Institute of Evolutionary Biology, University of Edinburgh, King's Buildings,
    West Mains Road, EH9 3JT U.K. \texttt{jerome.kelleher@ed.ac.uk}}}

\else
\author{Jerome Kelleher}
\institute{Jerome Kelleher \at
Institute of Evolutionary Biology, 
University of Edinburgh, King's Buildings,
West Mains Road, EH9 3JT U.K.
              \email{jerome.kelleher@ed.ac.uk}           
}
\date{Received: date / Accepted: date}

\fi
\maketitle

\begin{abstract}
A new formula for the partition function $p(n)$ is developed.
We show that the number of partitions of $n$ can be expressed as the sum of 
a simple function of the two largest parts of all partitions. Specifically,
if $a_1 + \dots + a_k = n$ is a partition of $n$ with $a_1 \leq \dots \leq a_k$ 
and $a_0 = 0$, then the sum of $\lfloor(a_k + a_{k - 1}) / (a_{k-1} + 1)\rfloor$ over all
partitions of $n$ is equal to $2p(n) - 1$.
\ifarxiv
\else
\keywords{Integer Partitions \and Lexicographic Order}
\subclass{05A17 \and 11P81} 
\fi
\end{abstract}

\section{Introduction}
A sequence of positive integers $\sequence{a}{k}$ is an
ascending composition of the positive integer $n$ if $\longsum{a}{k} = n$ and $a_1 \leq \dots
\leq a_k$.
Let $\sac(n)$ be the set of all ascending compositions of $n$ for some $n
\geq 1$, and let  $\sac(n, m) \subseteq \sac(n)$ be defined for $1\leq m
\leq n$ as $\sac(n, m) = \{\sequence{a}{k}\suchthat \sequence{a}{k} \in
\sac(n)$ and $a_1 \geq m\}$. Also, let $\nac(n) = \cardinality{\sac(n)}$, 
$\nac(n, m) = \cardinality{\sac(n, m)}$ and define $a_0 = 0$ for all ascending 
compositions. We show that 
\begin{equation}\label{eqn-ak-pn}
2\nump(n)  - 1 = \stacksum{\sequence{a}{k} \in}{\sac(n)}
{\left\lfloor \frac{a_{k - 1} + a_k}{a_{k - 1} + 1}
\right\rfloor}
\end{equation}
for all $n \geq 1$ using a simple combinatorial argument.

The proof of \equationRef{eqn-ak-pn} proceeds as follows. In \sectionRef{sec-sfl}
we demonstrate that the suffix length of $\sac(n)$ with respect to 
the lexicographic ordering is $2\nump(n) - 1$.
Then, in \sectionRef{sec-lex-succ}, we derive the lexicographic succession rule for ascending 
compositions, and show that iteratively applying this rule to all elements of $\sac(n)$ 
also allows us to compute the suffix length. The main result then follows immediately 
from these observations.

\section{Suffix Length}\label{sec-sfl}
If we consider the set of ascending compositions in lexicographic order, then the 
suffix length is defined as the sum of the length of all
suffixes that differ between adjacent compositions. The suffix length is a key 
element of Kemp's general approach to the analysis of combinatorial generation 
algorithms~\cite{kemp-generating}.  
In this section we show that the suffix length of $\sac(n)$ with respect to 
the lexicographic ordering is $2\nump(n) - 1$. 
To prove this result, we require
 a recurrence to enumerate the ascending 
compositions.

To count the ascending compositions of $n$ where the first part is 
at least $m$, we first observe that there is
exactly one composition where $a_1 > \floor{n / 2}$. We then note that
$m \leq a_1 \leq \floor{n / 2}$ when there is more that one part, 
since all values 
between $m$ and $\floor{n/2}$ are legitimate smallest parts. We then get  
\begin{equation}\label{eqn-nac-iter}
\nac(n, m) = 1 + \sum_{x = m}^{\floor{n / 2}} \nac(n - x, x)
\end{equation}
for all positive integers $m\leq n$~\cite[\chapterNumber{3}]{kelleher-encoding}. 

\ifarxiv
\begin{figure}
\else
\begin{figure}\sidecaption
\fi
\setlength{\unitlength}{\boxDiagramUnitlength}
\begin{picture}(5, 7)(0,0)
\linethickness{\boxDiagramLinethickness}
\put(0,0){\line(0,1) {7}}
\put(1,0){\line(0,1) {7}}
\put(2,1){\line(0,1) {6}}
\put(3,3){\line(0,1) {4}}
\put(4,5){\line(0,1) {2}}
\put(5,6){\line(0,1) {1}}
\put(0,0){\line(1,0) {1}}
\put(0,1){\line(1,0) {2}}
\put(0,2){\line(1,0) {2}}
\put(1,3){\line(1,0) {2}}
\put(1,4){\line(1,0) {2}}
\put(2,5){\line(1,0) {2}}
\put(3,6){\line(1,0) {2}}
\put(0,7){\line(1,0) {5}}
\put(0.33,0.33){$5$}
\put(0.33,1.33){$2$}
\put(0.33,2.33){$1$}
\put(0.33,3.33){$1$}
\put(0.33,4.33){$1$}
\put(0.33,5.33){$1$}
\put(0.33,6.33){$1$}
\put(1.33,1.33){$3$}
\put(1.33,2.33){$4$}
\put(1.33,3.33){$2$}
\put(1.33,4.33){$1$}
\put(1.33,5.33){$1$}
\put(1.33,6.33){$1$}
\put(2.33,3.33){$2$}
\put(2.33,4.33){$3$}
\put(2.33,5.33){$1$}
\put(2.33,6.33){$1$}
\put(3.33,5.33){$2$}
\put(3.33,6.33){$1$}
\put(4.33,6.33){$1$}
\end{picture}
\caption{Adjacency boxes of the set $\sac(5)$ in lexicographic order. 
Theorem~\ref{the-sfl} shows that there are $2\nump(n) - 1$ of these boxes for all 
$n$. In this example, we have $p(5) = 7$ and a total of $13$ boxes.
\label{fig-box-diagram}}
\end{figure}

Let $\sfl(n, m)$ be the suffix length of the set $\sac(n, m)$ with respect
to the lexicographic ordering. This corresponds to the 
number of adjacency boxes in $\sac(n, m)$ as shown in  \figureRef{fig-box-diagram}. In this 
diagram we list ascending compositions in lexicographic order, and draw a box around 
parts in adjacent partitions that are equal. Since parts in adjacent compositions 
are equal, we need one `write' operation for that part when generating the set.
We count these boxes by 
first noting that there is exactly one box in the set $\sac(n, m)$ when $m > \floor{ n /2}$.
Then, for each possible value for the smallest part $x$, there is one box, plus
the boxes in $\sac(n - x, x)$. We therefore get the following recurrence
\begin{equation}\label{eqn-sfl-iter}
\sfl(n, m) = 1 + \sum_{x = m}^{\floor{n / 2}}\left( \sfl(n - x, x) + 1\right),
\end{equation}
which holds for all $1\leq m\leq n$.  

\begin{theorem}\label{the-sfl}
If $n \geq 1$, then $\sfl(n, m) = 2\nac(n, m) - 1$ for all $1 \leq m \leq n$.
\end{theorem}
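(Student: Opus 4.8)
The plan is to prove $\sfl(n, m) = 2\nac(n, m) - 1$ by strong induction on $n$, exploiting the structural parallel between the enumeration recurrence \equationRef{eqn-nac-iter} and the suffix-length recurrence \equationRef{eqn-sfl-iter}. Both are indexed by the smallest part $x$ ranging over $m \leq x \leq \floor{n/2}$, and crucially each summand refers to the arguments $(n - x, x)$, whose first coordinate $n - x$ is strictly smaller than $n$ because $x \geq m \geq 1$. This makes strong induction on $n$ the natural choice, and it lets me treat all admissible $m$ for a given $n$ simultaneously, since the recurrence for $\sfl(n, m)$ never refers to $\sfl$ at the same value of $n$.

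For the base case I would take $m > \floor{n / 2}$, so that the summation ranges in both recurrences are empty. Then \equationRef{eqn-nac-iter} gives $\nac(n, m) = 1$ and \equationRef{eqn-sfl-iter} gives $\sfl(n, m) = 1$, and since $2 \cdot 1 - 1 = 1$ the identity holds. This covers in particular the case $n = 1$, where only $m = 1$ is possible, so the recursion is properly grounded.

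For the inductive step I would assume the identity for every total strictly less than $n$ and consider $1 \leq m \leq \floor{n / 2}$. Each term $\sfl(n - x, x)$ appearing in \equationRef{eqn-sfl-iter} then satisfies the inductive hypothesis, so I may replace it by $2\nac(n - x, x) - 1$. The computation I expect to carry out is
\begin{equation*}
\sfl(n, m) = 1 + \sum_{x = m}^{\floor{n / 2}} \left( 2\nac(n - x, x) - 1 + 1 \right) = 1 + 2\sum_{x = m}^{\floor{n / 2}} \nac(n - x, x),
\end{equation*}
after which \equationRef{eqn-nac-iter} identifies the remaining sum as $\nac(n, m) - 1$, yielding $\sfl(n, m) = 1 + 2\left( \nac(n, m) - 1 \right) = 2\nac(n, m) - 1$.

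The argument is short, and the main point to get right is the telescoping of constants: the $+1$ contributed by each box in \equationRef{eqn-sfl-iter} exactly cancels the $-1$ carried in by the inductive hypothesis, leaving a clean factor of two in front of the $\nac$ recurrence. I do not anticipate a genuine obstacle beyond verifying that $n - x < n$ for every index $x$ in the sum, so that the inductive hypothesis applies, and confirming that the empty-sum case is handled uniformly as the base of the induction.
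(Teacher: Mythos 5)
Your proof is correct and follows essentially the same route as the paper: strong induction on $n$, substituting the inductive hypothesis into \equationRef{eqn-sfl-iter} and recognizing the resulting sum via \equationRef{eqn-nac-iter}. The only cosmetic difference is that you ground the induction at the empty-sum case $m > \floor{n/2}$ rather than at $n = 1$, which is equally valid.
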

\begin{proof}
Proceed by strong induction on $n$.  
For $n = 1$, we immediately have  $\sfl(1, 1) = 2\nac(1,1) - 1$ from \equationRef{eqn-nac-iter}
and \equationRef{eqn-sfl-iter}.

Suppose, then, that $\sfl(n', m) = 2\nac(n', m) - 1$ when 
$1 \leq m \leq n'$, for all $1 < n' < n$, and some integer $n$.
Since $n' - x < n' $ for all $1 \leq m \leq x \leq \floor{n'/2} < n'$, we get 
\[
\sfl(n', m)  = 1 + 2\sum_{x = m}^{\floor{n' / 2}}\nac(n' - x, x)
\]
by \equationRef{eqn-sfl-iter} and  the inductive hypothesis. Rearranging, we get
\[
\sfl(n', m)  = 2\left(1 + \sum_{x = m}^{\floor{n' / 2}}\nac(n' - x, x)\right) - 1.
\]
Substituting for \equationRef{eqn-nac-iter} we get $\sfl(n', m)  = 2\nac(n', m) - 1$,
as required.
\jkqed\end{proof}

\section{Lexicographic Successor}\label{sec-lex-succ}
Let $\lexminA(n, m)$ be the lexicographically least element of the set $\sac(n, m)$,
and define $\singletonSequence{n}$ to be the singleton composition $a_1 = n$. It 
is easy to see that 
\begin{equation}\label{eqn-lexminA} 
\lexminA(n, m) = \overbrace{m\dots m}^{\mu}\concatenate
\singletonSequence{n - \mu m},
\end{equation}
where $\mu = \floor{ n / m } - 1$, since the prefix of $\lexminA(n, m)$ 
must be a sequence of $m$ of maximum length.
Then, if we let $\lexsuccA(\sequence{a}{k})$ be the lexicographic successor of 
$\sequence{a}{k}$ for any $\sequence{a}{k} \in \sac(n) \setminus 
\{\singletonSequence{n} \}$, we have 
\begin{equation}\label{eqn-lexsuccA}
\lexsuccA(\sequence{a}{k}) =  \sequence{a}{k - 2} \concatenate
\overbrace{m\dots m}^{\mu}
\concatenate\singletonSequence{n' - \mu m}
\end{equation}
where $m = a_{k - 1} + 1$, $n' = a_{k - 1} + a_{k}$, and $\mu = \floor{n'
/ m} - 1$. We can see that \equationRef{eqn-lexsuccA} correctly generates the 
lexicographic successor of $\sequence{a}{k}$ because it appends the lexicographically
least element of the set $\sac(a_{k - 1} + a_k, a_{k - 1} + 1)$ to $\sequence{a}{k - 2}$
(see Kelleher~\cite[\chapterNumber{5}]{kelleher-encoding} for a full proof).

\begin{theorem}\label{the-ak-nac}
For all $n \geq 1$
\begin{equation}\label{eqn-ak-nac}
2\nac(n, m)  - 1=  \left\lfloor\frac{n(1 - m)}{m}\right\rfloor + 
\stacksum{\sequence{a}{k} \in}{\sac(n)}
{\left\lfloor \frac{a_{k - 1} + a_k}{a_{k - 1} + 1}
\right\rfloor}
\end{equation}
\end{theorem}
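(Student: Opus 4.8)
The plan is to compute the right-hand side directly from the lexicographic succession rule~\equationRef{eqn-lexsuccA}, carrying out the paper's stated strategy of evaluating the suffix length by iterating $\lexsuccA$. (I read the index set of the sum as $\sac(n,m)$; for $m=1$ this is all of $\sac(n)$ and recovers~\equationRef{eqn-ak-pn}.) The starting observation is that every $\sequence{a}{k}\in\sac(n,m)$ other than the singleton $\singletonSequence{n}$ has $k\geq 2$, and for such a composition the summand is precisely the length of the block rewritten by~\equationRef{eqn-lexsuccA}: with $m'=a_{k-1}+1$, $n'=a_{k-1}+a_k$ and $\mu=\floor{n'/m'}-1$, the appended suffix $\overbrace{m'\dots m'}^{\mu}\concatenate\singletonSequence{n'-\mu m'}$ has length $\mu+1=\floor{n'/m'}=\floor{(a_{k-1}+a_k)/(a_{k-1}+1)}$. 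So, away from the singleton, each summand counts the parts that change when we advance one step in lexicographic order.

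Next I would telescope. Because lexicographic order is a total order on the finite set $\sac(n,m)$ and~\equationRef{eqn-lexsuccA} stays inside $\sac(n,m)$ (the first part never decreases), $\lexsuccA$ is a bijection from $\sac(n,m)\setminus\{\singletonSequence{n}\}$ onto $\sac(n,m)\setminus\{\lexminA(n,m)\}$. Summing $\mu+1$ over all non-singleton compositions is therefore the same as summing, over each $v\neq\lexminA(n,m)$, the length of the suffix that distinguishes $v$ from its predecessor $v'$. By~\equationRef{eqn-lexsuccA} this predecessor agrees with $v$ in exactly its first $\mathrm{len}(v')-2$ parts---the following part of $v$ exceeds that of $v'$ by one---so the distinguishing suffix has length $\mathrm{len}(v)-\mathrm{len}(v')+2$, writing $\mathrm{len}(\cdot)$ for the number of parts. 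Along the chain $w_1<\dots<w_N$, with $N=\nac(n,m)$, these lengths telescope to $\mathrm{len}(w_N)-\mathrm{len}(w_1)+2(N-1)$.

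It then remains to read off the boundary data. The lexicographic maximum is $w_N=\singletonSequence{n}$ with $\mathrm{len}(w_N)=1$, the minimum $w_1=\lexminA(n,m)$ has $\mathrm{len}(w_1)=\floor{n/m}$ by~\equationRef{eqn-lexminA}, and the singleton itself, using $a_0=0$, contributes $\floor{(a_0+a_1)/(a_0+1)}=\floor{n/1}=n$. Adding this singleton term to the telescoped total $1-\floor{n/m}+2(N-1)$ makes the sum in~\equationRef{eqn-ak-nac} equal to $2\nac(n,m)-1+(n-\floor{n/m})$. The elementary identity $\floor{n(1-m)/m}=\floor{n/m-n}=\floor{n/m}-n$, valid because $n$ is an integer, then shows that $2\nac(n,m)-1$ exceeds this sum by exactly $\floor{n(1-m)/m}$, which is~\equationRef{eqn-ak-nac}.

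The work here is careful bookkeeping rather than a deep step, and the one real hazard is the asymmetric role of the two extreme compositions. The singleton $\singletonSequence{n}$ sits at the top of the chain and still contributes its own summand $n$ although it has no successor, whereas $\lexminA(n,m)$ sits at the bottom and is the successor of nothing; keeping these aligned is what prevents an off-by-one error in the bijection and the telescoping. I would also check the degenerate range $m>\floor{n/2}$ separately, where $\sac(n,m)=\{\singletonSequence{n}\}$, $N=1$ and $\floor{n/m}=1$, so that the claim collapses to $1=n+\floor{n(1-m)/m}$, which indeed holds.
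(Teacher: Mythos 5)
Your argument is correct, and it reaches the identity by a genuinely different route from the paper. The paper evaluates the sum by identifying it with the suffix length $\sfl(n,m)$ of the lexicographic generation process and then invoking \theoremRef{the-sfl} ($\sfl(n,m)=2\nac(n,m)-1$), which was proved separately by strong induction on the recurrence~\equationRef{eqn-sfl-iter}. You instead evaluate the sum directly: your observation that the summand attached to a non-maximal composition $v'$ equals $\mu+1=\mathrm{len}(\lexsuccA(v'))-\mathrm{len}(v')+2$ turns the whole sum into a telescope along the lexicographic chain, and the boundary values $\mathrm{len}(\langle n\rangle)=1$ and $\mathrm{len}(\lexminA(n,m))=\floor{n/m}$ finish the computation. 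This buys two things: it makes \theoremRef{the-sfl} and the recurrence~\equationRef{eqn-sfl-iter} unnecessary for the main result, and it replaces with a precise telescoping identity the step the paper leaves informal, namely that the recursively defined $\sfl$ coincides with the total write count obtained by iterating $\lexsuccA$ from $\lexminA(n,m)$. What you still share with the paper is reliance on the correctness of~\equationRef{eqn-lexminA} and of the succession rule~\equationRef{eqn-lexsuccA}, which the paper itself only cites. Two further points in your favour: reading the index set as $\sac(n,m)$ is not merely a convenience but a necessary repair --- as literally printed, with the sum over all of $\sac(n)$, the identity already fails for $n=m=2$ (left side $1$, right side $-1+1+2=2$) --- and your bookkeeping $2\nac(n,m)-1=\left(\floor{n/m}-n\right)+\sum(\cdots)$ has the sign of the correction term consistent with the final statement, whereas the paper's intermediate display $\sfl(n,m)=n-\floor{n/m}+\sum(\cdots)$ has it reversed. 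The only place you are terser than you might be is the claim that $\lexsuccA$ restricted to $\sac(n,m)$ is the successor function of $\sac(n,m)$ itself; this does need the remark that when $k=2$ the first part is replaced by $a_1+1\geq m$, but your parenthetical ``the first part never decreases'' covers exactly that case, so the argument stands.
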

\begin{proof}
The number of write operations required to generate the lexicographically least 
element of $\sac(n, m)$ is $\floor{n/m}$ by \equationRef{eqn-lexminA},
since there are $\floor{n/m} - 1$ copies of $m$ written, followed by  one write to insert
the remainder at the end of the composition.  The remaining 
write operations are then counted by summing the number of writes required by 
\equationRef{eqn-lexsuccA} over all elements of $\sac(n) \setminus 
\{\singletonSequence{n} \}$. The number of writes required to transition 
from an arbitrary composition $\sequence{a}{k}$ to its lexicographic 
successor is $\floor{(a_k + a_{k - 1}) / (a_{k - 1} + 1)}$ 
by~\equationRef{eqn-lexsuccA}.  Therefore, we get 
\[
\sfl(n, m) = \floor{n / m} + \stacksum{\sequence{a}{k} \in}{\sac(n) \setminus\{\singletonSequence{n} \} }
{\left\lfloor \frac{a_{k - 1} + a_k}{a_{k - 1} + 1}
\right\rfloor}
\]
Then, since $\floor{(a_{k - 1} + a_k) / (a_{k - 1} + 1)} = n$ when $a_1 = n$ (and $a_0 = 0$), we get 
\[
\sfl(n, m) = n - \floor{n / m} + \stacksum{\sequence{a}{k} \in}{\sac(n) }
{\left\lfloor \frac{a_{k - 1} + a_k}{a_{k - 1} + 1}
\right\rfloor}
\]
Substituting for $\sfl(n, m) = 2\nac(n, m) - 1$ using~\theoremRef{the-sfl} we get 
\equationRef{eqn-ak-nac}, as required.
\jkqed\end{proof}

Setting $m = 1$ in \equationRef{eqn-ak-nac} we get
\equationRef{eqn-ak-pn}, and therefore establish the main result.


\bibliographystyle{plain}    
\bibliography{tlpffp}  

\end{document}